\newcolumntype{d}[1]{D{.}{.}{#1}}
\renewcommand{\algorithmicrequire}{\textbf{Input: }}
\renewcommand{\algorithmicensure}{\textbf{Output: }}
\newcommand{\End}{\hfill $\square$}
\newcommand{\PfEnd}{}
\DeclareMathOperator*{\diag}{diag}
\newcommand{\mbb}[1]{\mathbb #1}
\newcommand{\mbf}[1]{\mathbf #1}
\newcommand{\mcl}[1]{\mathcal #1}
\newcommand{\set}[1]{\mathbb #1}
\newcommand{\X}{\mbb{X}}
\newcommand{\Xf}{\mbb{X}_{\textnormal{f}}}
\newcommand{\Xini}{\mbb{X}_0}
\newcommand{\Xtrain}{\mbb{X}_{\text{train}}}
\newcommand{\Vf}{V_{\textnormal{f}}}
\newcommand{\U}{\mbb{U}}
\newcommand{\I}{\mbb{I}} 
\newcommand{\IPL}{\mbb{N}} 
\newcommand{\IP}{\mbb{N}^+} 
\newcommand{\R}{\mbb{R}}
\newcommand{\PF}{ \mathscr{P}(x_k)} 
\newcommand{\PFk}[1]{ \mathscr{P}(x_{#1})} 
\newcommand{\PRz}{ \widehat{\mathscr{P}}(x_k)} 
\newcommand{\PR}{\widehat{\mathscr{P}}(x_k,\tilde{\mathbf{w}}_k)} 
\newcommand{\PRk}[1]{\widehat{\mathscr{P}}(x_{#1},\tilde{\mathbf{w}}_{#1})} 
\newcommand{\OVF}{V} 
\newcommand{\dx}{\textrm{d}x}
\newcommand{\Rnx}{\mbb{R}^{n}}
\newcommand{\Rnu}{\mbb{R}^{m}}
\newtheorem{corollary}{Corollary}
\newtheorem{lemma}{Lemma}
\newtheorem{proposition}{Proposition}
\newtheorem{remark}{Remark}
\newtheorem{example}{Example}
\newtheorem{assumption}{Assumption}
\newcommand{\edit}[1]{\color{black} #1 \color{black} }
\title{NMPC in Active Subspaces: Dimensionality Reduction \\ with Recursive Feasibility Guarantees}
\author{Guanru Pan and Timm Faulwasser$^{\star}$
	\thanks{ $^{\star}$: Corresponding author. G. Pan and T. Faulwasser are with Institute for Energy Systems, Energy Efficiency and Energy Economics, TU Dortmund University, Dortmund,  Germany. Email addresses:
	 guanru.pan@tu-dortmund.de, timm.faulwasser@ieee.org }}
\begin{document}

\maketitle
\thispagestyle{empty}
\pagestyle{empty}

\begin{abstract}
Dimensionality reduction of decision variables is a practical and classic method to reduce the computational burden in linear and Nonlinear Model Predictive Control (NMPC). Available results range from early move-blocking ideas to singular-value decomposition. For schemes more complex than move-blocking it is seemingly not straightforward to guarantee recursive feasibility of the receding-horizon optimization. Decomposing the space of decision variables related to the inputs into active and inactive complements, this paper proposes a general framework for effective feasibility-preserving dimensionality reduction in NMPC. We show how---independently of the actual choice of the subspaces---recursive feasibility can be established. Moreover, we propose the use of global sensitivity analysis to construct the active subspace in data-driven fashion based on user-defined criteria. 
Numerical examples illustrate the efficacy of the proposed scheme. Specifically, for a chemical reactor we obtain a significant reduction by factor $20-40$  at a closed-loop performance decay of less than $0.05\%$.
\end{abstract}

\textbf{Keywords}:
Nonlinear model predictive control, reduced-order MPC,  dimensionality reduction, active subspaces, global sensitivity analysis

\section{Introduction}
The last two decades have seen tremendous research progress on linear and Nonlinear Model Predictive Control (NMPC) in terms of system theoretic analysis---which covers stochastic, robust, distributed, as well as economic formulations, cf. \cite{Faulwasser18c,Gruene17a,Rawlings20,Coulson19}---and  in terms of numerical methods. The latter range from multi-parametric programming for pre-computation of  explicit feedback laws~\cite{Bemporad02a,Tondel03} to tailored online solutions~\cite{Houska11a,Kapernick14a}. 

It is well-known that  NMPC can rely on suboptimal solutions~\cite{Allan17,Graichen10a,Scokaert99} which, e.g.,  arise from early truncation of the iterative numerical schemes or from the solution of the underlying Optimal Control Problem (OCP) with a low-dimensional input parametrization, i.e., solving the  OCP  in a low-dimensional input subspace. 
In the context of continuous-time OCPs the construction and exploitation of low-dimensional (\textit{parsimonious}) input parametrizations is a classic theme, cf.~\cite{Teo91} and the introductory overview in~\cite{Aydin18a}. In case of discrete-time MPC three established approaches towards structured computation of input subspaces can be identified:  move blocking \cite{Cagienard07,Shekhar15}, Singular Value Decomposition (SVD)~\cite{Bemporad20,Hara12,Ong14,Rojas04,Unger12}, and Laguerre functions \cite{Khan13,Lawrynczuk20,Wang04}.

Move blocking has already been mentioned in the early textbook~\cite{Maciejowski02}. The idea is to limit the number of control decision variables by keeping the inputs constant during pre-specified periods. Adapting the blocking structure at each MPC iteration, \cite{Cagienard07} presents stability and feasibility guarantees for linear MPC. 
In \cite{Shekhar15} stability and feasibility guarantees are given for fixed blocking structures in linear MPC; the authors propose an approach to obtain a blocking structure, which implies a maximized region of attraction of the closed loop.

Alternatively, in \cite{Unger12} SVD is performed on a matrix consisting of optimal input sequences. This parametrization approach is also suitable for nonlinear MPC. However, \cite{Unger12} does not provide NMPC stability guarantees for the low-dimensional setting. In \cite{Hara12} the low-dimensional subspace is characterized by applying SVD to the Hessian of the condensed OCP, while \cite{Ong14} employs SVD on a matrix constructed by condensing equality constraints induced by the linear dynamics to determine a low-dimensional subspace.
Recently,  \cite{Bemporad20} proposes a scheme that combines SVD and K-means clustering.
 The contributions \cite{Bemporad20,Hara12,Ong14} provide feasibility guarantees for linear MPC by considering a suboptimal feasible control  sequence as a fixed or scaled parametric part of the low-dimensional solution.
Moreover, Laguerre polynomials and other orthogonal bases can be used for low-dimensional parametrization of the control sequence \cite{Khan13,Wang04}.
The latter two approaches have feasibility and stability guarantees for linear MPC. Moreover, \cite{Lawrynczuk20} proposes an NMPC scheme with reasonable performance based on online trajectory linearization and parametrization with Laguerre functions.

A commonality of all works mentioned above is that the underlying constructions of low-dimensional input subspaces are restricted to linear MPC or linearized versions of NMPC. To the best of the authors' knowledge, results on low-dimensional NMPC with recursive feasibility and stability guarantees are so far not available in the literature. 
Extending the preliminary results of the master thesis~\cite{kit:pan20}, the present paper addresses this gap. 
Our first contribution is a generalized framework for NMPC in low-dimensional subspaces with feasibility and stability guarantees. Importantly, our approach does not depend on the specific choice or the considered construction of the subspace. 
Similarly to \cite{Bemporad20,Hara12,Ong14,Shekhar15}, we establish recursive feasibility by constructing a guess from the last known optimal input sequence. In contrast to \cite{Bemporad20,Hara12,Ong14,Shekhar15} we allow for nonlinear dynamics.  Moreover, we show that our approach can be extended beyond the stabilizing setting provided a constructor for feasible guesses in the original high-dimensional input space is known.  
 
 The second contribution is a data-driven framework for the construction of low-dimensional input subspaces for NMPC. To this end, we leverage a method from global sensitivity analysis known as \textit{active subspaces}~\cite{Constantine15}.  The computation of active subspaces based on global sensitivity analysis has been considered for measurement-based real-time optimization under uncertainty~\cite{Costello16,Singhal18}.  However, the concepts of active subspaces have not been used for MPC yet.
  Considering a user-defined function, which maps the initial condition onto a specific sensitivity criterion with respect to the input decisions,  we suggest approximating the corresponding covariance matrix over a set of initial conditions via sampling. In turn, this enables to compute active subspaces relevant for NMPC via SVD. 
 For a specific linear-quadratic example, we observe that the proposed MPC scheme achieves minuscule closed-loop performance loss in the range of $10^{-2} \%$ with only 6 decision variables compared to 120 in the full-order problem.
 We show for a nonlinear robot example that  dimensionality reduction of the input decision variables by a factor $10$  leads to a decay of  closed-performance of stabilizing NMPC averaged over several initial conditions of only $2.5\%$. Moreover, for economic NMPC of a chemical reactor exhibiting the turnpike phenomenon, we observe that a reduction by a factor $20-40$ leads to a closed-loop performance decay of less than $0.05\%$. 
 	We observe in simulations (cf. Section~\ref{Sec:results}) that nominal NMPC schemes with the same number of decision variables (via shorter horizons) as used in the reduced subspaces  either encounter infeasibility (if terminal constraints are present) or result in significant closed-loop performance loss. 

The reminder of the paper is structured as follows: Section~\ref{sec:preliminaries} gives details about the considered setting; Section~\ref{Sec:activesubspace} and Section~\ref{sec:sensi} present the main results. Section~\ref{Sec:results} considers  several numerical examples; the paper ends with conclusions in Section~\ref{sec:con}.

\section{Preliminaries}\label{sec:preliminaries}
We consider nonlinear discrete-time systems 
\begin{equation}\label{eq:dynamic}
x^+ = f(x,u),\quad x(0) = x_0.
\end{equation}
where $f: \mathbb{X} \times \mathbb{U} \to \mathbb{X}$ is the continuous state transition map,  $x \in \mathbb{X} \subseteq \Rnx$  is the state, and  $u \in \mathbb{U} \subseteq \Rnu$ is the input. We assume that the origin $(x,u)=(0,0)$ is an equilibrium of \eqref{eq:dynamic}.
The constraint sets $\mathbb{X}$ and $\mathbb{U}$ are closed and contain the origin in their interior. 
As usual in NMPC, at time instant $k \in \IPL$, we consider the following OCP  with horizon $N\in \IP$,
\begin{subequations}\label{eq:OCPk}
\begin{align}
V(x_k) \doteq \min\limits_{\mbf{u}_k,\mbf{x}_k}\sum_{i=0}^{N-1}\ell(x_{k+i|k},&u_{k+i|k})+\Vf(x_{k+N|k}) , \\
\text{s.t.}\quad   \forall i \in \set{I}_{[0,N-1]}, \quad x_{k|k}&=x_k \\
    x_{k+i+1|k}&=f(x_{k+i|k}, u_{k+i|k})\ \ \\
    u_{k+i|k}&\in  \mathbb{U} ,\,  x_{k+i|k}\in  \mathbb{X} \label{eq:constraints}\\
    x_{k+N|k}& \in \Xf,\label{eq:tercon}
\end{align}
\end{subequations}
where $\mathbf{u}_k \doteq [u_{k|k}^\top, ..., u_{k+N-1|k}^\top]^\top$ denotes the predicted control sequence at time instant $k$ and $\mathbf{x}_k \doteq [x_{k|k}^\top,..., x_{k+N|k}^\top]^\top$ the corresponding predicted state sequence. The stage cost $\ell: \Rnx \times \Rnu \to \mbb{R}$ and the terminal cost $\Vf: \Rnx \to \mbb{R}$ are continuous. The set of feasible initial conditions of \eqref{eq:OCPk} is denoted as $\hat{\mbb{X}}_0 \subseteq \mathbb{X}$. Henceforth, we consider $\Xini \subseteq \hat{\mbb{X}}_0 \subseteq \mathbb{X}$, i.e., a compact set of initial conditions of interest and which contains the origin in its interior. Whenever necessary, optimal solutions are indicated by superscript $\cdot^\star$.

The following assumptions  are frequently used to ensure recursive feasibility  and stability via terminal constraints, cf.~\cite{Chen98,Mayne00}. 
\begin{assumption}[Terminal ingredients]\label{ass:cond}~\\ \vspace*{-0.3cm}\hspace{5cm}
\begin{itemize}
\item[A1] There exists a function $\alpha\in\mathcal{K}_{\infty}$ such that $\ell(x,u) \geq \alpha(\| x \|)$, $\forall(x,u)\in \mathbb{X} \times \mathbb{U}$ and $\ell(0,0)=0$.
\item[A2]  There exists a feedback $\kappa : \Xf \to \mathbb{U}$ such that for all $x \in \Xf$, $f(x,\kappa(x)) \in \Xf$, $\Vf(0)=0$, and 
\begin{equation}
\Vf\left( f\left(x, \kappa(x)\right)\right) -\Vf(x) \leq -\ell(x,\kappa(x)).\label{ass:FF}
\end{equation}
\item[A3] There exists a function $\beta\in\mathcal{K}_{\infty}$ such that $\OVF(x) \leq \beta(\|x\|)$, $\forall x \in \Xini$. \End
\end{itemize}
\end{assumption}
The above conditions use the usual notion of class $\mcl K_\infty$ functions.\footnote{A function $\alpha: \mathbb R^+_0 \to \mathbb R^+_0$ is said to belong to class $\mcl K$ if its is strictly monotonously increasing and $\alpha(0) = 0$. It belongs to class $\mcl K_\infty$ if additionally $\lim_{s\to \infty} \alpha(s) = \infty$.  } 

Observe that, formally, the decision variables of OCP~\eqref{eq:OCPk} are the predicted inputs $\mbf{u}_k$ \textit{and} the predicted states $\mbf{x}_k$. However, the predicted states $x_{k+i|k}$, $i \in \I_{[1,N]}$, depend on $x_k$ and on the choice of $\mbf{u}_k$. They can be expressed recursively as
\begin{equation}\label{eq:condensing}
x_{k+i|k}= f\left(f\left(f\left(x_k,u_{k|k}\right),u_{k+1|k}\right),...,u_{k+i-1|k}\right).
\end{equation}
After the elimination of state variables, which in the context of linear quadratic MPC is also known as \textit{condensing}, one can  reformulate \eqref{eq:OCPk} as follows 
\begin{subequations}
\label{eq:fullOCP}
\begin{align}
\mathscr{P}(x_k):\ \ \  \min_{\mathbf{u}_k} J(x_k,\mathbf{u}_k)\label{eq:condensedObj}\\
\text{s.t. }
\label{eq:condensedcon}
g(x_k ,\mathbf{u}_k) \leq 0,
\end{align}
\end{subequations}
where $J: \mathbb{X} \times \mathbb{U}^N \to \mathbb{R}$ and $g: \mathbb{X} \times \mathbb{U}^N \to \mathbb{R}^{n_g}$ are the rewritten objective and constraints, respectively. Notice that \eqref{eq:condensedcon} summarizes \eqref{eq:constraints}-\eqref{eq:tercon}. For the sake of simplified exposition, our subsequent discussions focus on the condensed problem~\eqref{eq:fullOCP}.

\section{Exploiting Active Subspaces in NMPC} \label{Sec:activesubspace}
Consider 
\begin{equation}\label{eq:InputProjection}
	T \hat{\mathbf{u}} = \begin{bmatrix}
		T_1 & T_2
	\end{bmatrix}\begin{bmatrix}
		\mathbf{v} \\ \mathbf{w}
	\end{bmatrix}
	= \mathbf{u},
\end{equation}
that is, the orthonormal matrix $T\in \mathbb{R}^{Nm \times Nm}$ with $TT^\top = I$  which maps $\mathbf{u} \in \mathbb{R}^{Nm}$ to new coordinates $(\mathbf{v},\mathbf{w})$.
Specifically, due to the orthonormality of $T$, we have $T^{-1} = T^\top$ and thus
 \begin{equation}
 	\begin{bmatrix}\label{eq:InputProjection_inverse}
 		\mathbf{v} \\ \mathbf{w}
 	\end{bmatrix} =T^\top  \mathbf{u} = \begin{bmatrix}
 		T_1^\top \mathbf{u}  \\ T_2^\top \mathbf{u}
 	\end{bmatrix}.
 	\end{equation}
  For the sake of simplicity and without loss of generality, we consider the case that $T_1 \in \mathbb{R}^{Nm \times q}$ and $T_2 \in \mathbb{R}^{Nm \times Nm-q}$ are identical to the first $q$ and the remaining $Nm-q$ columns of $T$, respectively.|| 
Following the terminology of~\cite{Constantine15}, we refer to the space spanned by $T_1$ as \textit{active subspace} $\mathcal{AS}=\mathrm{col}(T_1)$. If $q\ll Nm$, $\mathcal{AS}$ is a low dimensional subspace of  $\mathbb{R}^{Nm}$. The complement spanned by $T_2$ is denoted as \textit{inactive subspace} $\mathcal{IS}= \mathrm{col}(T_2)$. 

At this point, we postpone answering the question of how to compute the projectors $T_1$ and $T_2$ to Section~\ref{sec:sensi}. However, temporarily supposing that $T_1$ and $T_2$ are chosen such that $q \ll Nm$, i.e. $\dim v \ll \dim v + \dim w$, we are interested in analyzing the condensed problem \eqref{eq:fullOCP} if the optimization is performed only with respect to $\mathbf{v}_k=T_1^\top\mathbf{u}_k$. That is, we neglect, respectively, we restrict the inactive component $\mathbf{w}_k=T_2^\top\mathbf{u}_k$.
For starters, setting $\mathbf{w}_k=0$, we  obtain the reduced problem 
\begin{subequations}\label{eq:naivePR}
\begin{align}
 \PRz :\quad  \mathop{\min}_{ \mathbf{v}_k}J(x_k,T_1\mathbf{v}_k ) \\
      \text{s.t.    }
      g(x_k,T_1\mathbf{v}_k) \leq 0. 
\end{align}
\end{subequations}
However, except for specific matrices $T$---e.g., obtained via move-blocking---it is not clear how to enforce recursive feasibility of the NMPC loop.
 The main reason is that in the $(\mathbf{v},\mathbf{w})$ coordinates the multi-stage structure is usually lost, i.e., the problem in the reduced subspace is more densely coupled and the block structure of Jacobians and Hessians is lost. 
The next example shows that the optimal control problem can be infeasible if the inactive subspace component $\mathbf{w}$ is neglected.
\begin{example}[Infeasibility of $ \PRz$] Consider an OCP for a linear controllable system $x^+ = A x +B u$ with $x\in \mathbb{R}^2$ and $u\in \mathbb{R}^1$.  The stage cost is $\ell(x,u) = x^\top x +  u^2$. The horizon is $N=5$. 
 There are no state or input constraints and only the terminal constraint $x_5 = 0 \in \R^2$ is considered. 
The projection $T_1= [1,0,0,0,0]^\top$ implies that we consider the first input, $u_1$, as $\mathbf{v}$ and the other decision variables are forced to be zero. However, the  two-dimensional terminal equality constraint  $x_5 = 0 \in \R^2$ requires at least two degrees of freedom. Hence, in this case the reduced-order OCP $\widehat{\mathscr{P}}(x_0)$ will likely be infeasible for most $x_0\in\mathbb{R}^2$ while the full-order OCP $\mathscr{P}(x_0)$ is feasible  for all $x_0\in\mathbb{R}^2$. \End
\end{example}

To enforce feasibility of the reduced-order OCP, we consider the vector $\mbf{w}_k$ which is passed as an additional parameter to $\mathscr{P}(x_k)$. Suppose that a structured procedure to construct feasible solutions to $\mathscr{P}(x_{k})$ from an optimal solution to $\mathscr{P}(x_{k-1})$ is known---which is the case if Assumption~\ref{ass:cond} holds. 
In other words,  we consider a feasible suboptimal solution $\tilde{\mathbf{u}}_k$ of $\mathscr{P}(x_{k})$ modeled as
\begin{equation}\label{eq:tildeuk}
	\tilde{\mathbf{u}}_k = T_1\tilde{\mathbf{v}}_k + \mu_k T_2\tilde{\mathbf{w}}_k,
\end{equation}
where $\mu_k $ is temporarily fixed to the value $\mu_k=1$. Then,
the problem
\begin{subequations} \label{eq:PR}
\begin{align}
\PR:\,  \mathop{\min}_{ \mathbf{v}_k,\mu_k}J(x_k,T_1\mathbf{v}_k+ \mu_k T_2  \tilde{\mathbf{w}}_k ) \\
      \text{s.t.    }  g(x_k,T_1\mathbf{v}_k+ \mu_k T_2  \tilde{\mathbf{w}}_k) \leq 0.
\end{align}
\end{subequations}
is of the same dimension as~\eqref{eq:naivePR}. Apparently, it
admits the feasible solution $\mathbf{v}_k=\tilde{\mathbf{v}}_k,\, \mu_k = 1$, cf. \eqref{eq:tildeuk}. 
To fully exploit this observation, one needs to obtain $\tilde{\mathbf{w}}_k$ at run-time of the NMPC controller---yet prior to the optimization. Furthermore, in \eqref{eq:PR} we introduce $\mu_k \in \mathbb{R}$ as an extra scalar decision variable. Compared to the fixed value $\mu_k =1$, this consideration improves performance of the closed-loop since the suboptimal guess $\tilde{\mbf{w}}_k$ can be scaled to zero.

In case of stabilizing NMPC, the computation of $\tilde{\mbf{w}}_k$ follows the classic construction of feasible solutions~\cite{Chen98,Rawlings20}:
assuming the last predicted input trajectory $\mathbf{u}_{k-1}$ is known, one obtains
\begin{equation}\label{eq:recurfeaupdate}
\tilde{\mathbf{u}}_{k} = [u_{k|k-1}^\top,...,u_{k+N-2|k-1}^\top,\kappa(x_{k+N-1|k-1})^\top]^\top,
\end{equation} 
 i.e. by shifting $\mathbf{u}_{k-1}$ and appending $\kappa(x_{k+N-1|k-1})$, cf. Assumption~\ref{ass:cond}. Moreover, as outlined above, with $\tilde{\mbf{w}}_k = T_2^\top \tilde{\mbf{u}}_k$, cf. \eqref{eq:InputProjection_inverse}, we ensure feasibility of $\PR$.

\subsection*{Stability and Feasibility of Active-subspace NMPC}\label{sec:RMPC}
Next, we consider the active-subspace NMPC scheme based on $\PR$ from \eqref{eq:PR} as summarized in Algorithm \ref{alg:reducedMPC}. Given the current state estimate/measurement $x_k$ and the feasible guess $\tilde{\mathbf{u}}_k$, the on-line optimization phase solves  the reduced-order problem $\PR$ for $(\mbf{v}^\star_k,\mu^\star_k)$ with $\tilde{\mathbf{w}}_k = T_2^\top \tilde{\mathbf{u}}_k$. If
the employed optimization routine satisfies
\begin{equation}\label{eq:CostDecrease_star}
J(x_k,T_1{\mathbf{v}}^\star_k + \mu^\star_k T_2\tilde{\mathbf{w}}_k) \leq J(x_k,\tilde{\mathbf{u}}_k),
\end{equation}
we reconstruct the control sequence $\mathbf{u}_k = T_1 \mathbf{v}_k^\star+ \mu_k^\star T_2 \tilde{\mathbf{w}}_k$; otherwise, we  set $\mathbf{u}_k  = \tilde{\mathbf{u}}_k$. This way, we ensure the cost decrease condition 
\begin{equation}\label{eq:CostDecrease}
	J(x_k, \mathbf{u}_k) \leq J(x_k,\tilde{\mathbf{u}}_k),
\end{equation}
which is helpful to show stability \cite{Scokaert99}.

 Observe that in Algorithm \ref{alg:reducedMPC} the full-order OCP $\PFk{0}$ is solved for the feasible guess $\tilde{\mbf{u}}_0$ at $k =0$ to initialize $\tilde{\mbf{w}}_0$, and then at each sampling instant $k\in \IP$, we update $\tilde{\mathbf{w}}_k$  with the last predicted inputs $\mathbf{u}_{k-1}$ by \eqref{eq:recurfeaupdate}. In case an alternative construction of feasible initial guesses for $\PR$ is available, one may straightforwardly modify this step. We note that due to the construction of the feasible guess, the NMPC scheme becomes a dynamic feedback strategy as $\tilde{\mathbf{w}}_k$ can be regarded as a memory state of the controller.

\begin{algorithm}[t]
        \caption{Active-subspace NMPC with $\PR$ }\label{alg:reducedMPC}
\algorithmicrequire $T_1$, $T_2$, $\kappa$, $x_0\in \Xini$
\begin{algorithmic}[1]
\State Solve $\PFk{0}$  for $\tilde{\mbf{u}}_0$, set $\tilde{\mbf{w}}_0 \leftarrow T_2^\top \tilde{\mbf{u}}_0$, $k \leftarrow 0$
\State Estimate $x_k$, solve $\PR$ from \eqref{eq:PR} for $(\mathbf{v}_k^\star,\mu_k^\star)$ 
\label{step:2}
\If{Condition \eqref{eq:CostDecrease_star} holds} 
\State  $\mathbf{u}_k \leftarrow T_1 \mathbf{v}_k^\star+ \mu_k^\star T_2 \tilde{\mathbf{w}}_k$ 
\Else \State  $\mathbf{u}_k \leftarrow \tilde{\mathbf{u}}_k$ \EndIf
\State Apply the first step of $\mathbf{u}_k$
\State \label{step:fesupdate} Update $\tilde{\mathbf{u}}_{k}$ by \eqref{eq:recurfeaupdate}, $\tilde{\mbf{w}}_k \leftarrow T_2^\top \tilde{\mathbf{u}}_{k}$ 
\State $k \leftarrow k+1$, and go to Step~\ref{step:2}
\end{algorithmic}
\end{algorithm}

The following propositions establish recursive feasibility and stability of NMPC based on $\PR$ from \eqref{eq:PR}, respectively, based on Algorithm \ref{alg:reducedMPC}.
\begin{proposition}[Recursive feasibility in  subspaces] \label{prop:recursive_feasibility}
	 Let Assumption \ref{ass:cond} hold and consider the active-subspace NMPC based on \eqref{eq:PR} and Algorithm \ref{alg:reducedMPC}. If $\PRk{k}$ is feasible for $k \in\IPL$, i.e., there is a $( \mathbf{v}_{k}, \mu_{k})$ such that $\mathbf{u}_{k } = T_1 \mathbf{v}_{k}+ \mu_{k} T_2 \tilde{\mathbf{w}}_{k}$ satisfies the constraints of \eqref{eq:PR}, then $\PRk{k+1}$ is also feasible. 
\End
\end{proposition}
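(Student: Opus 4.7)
The plan is a direct adaptation of the standard terminal-set recursive feasibility argument, with the extra observation that the orthonormality of $T$ lets one translate feasibility in the full space $\mathbf{u}$ to feasibility in the reduced coordinates $(\mathbf{v},\mu)$. First, I would note that the constraint in $\widehat{\mathscr{P}}(x_k,\tilde{\mathbf{w}}_k)$ is $g(x_k, T_1\mathbf{v}_k+\mu_k T_2\tilde{\mathbf{w}}_k)\leq 0$, which is identical to $g(x_k,\mathbf{u}_k)\leq 0$ under the substitution $\mathbf{u}_k = T_1\mathbf{v}_k+\mu_k T_2\tilde{\mathbf{w}}_k$. Therefore, whichever branch Algorithm~\ref{alg:reducedMPC} takes at step $k$ (the optimized pair $(\mathbf{v}_k^\star,\mu_k^\star)$ or the fallback $\tilde{\mathbf{u}}_k=T_1\tilde{\mathbf{v}}_k+T_2\tilde{\mathbf{w}}_k$), the resulting applied sequence $\mathbf{u}_k$ is a feasible input trajectory of the full OCP $\mathscr{P}(x_k)$.

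Second, I would invoke the classical shift-and-append construction~\eqref{eq:recurfeaupdate}. Writing $\mathbf{u}_k = [u_{k|k}^\top,\dots,u_{k+N-1|k}^\top]^\top$ and letting $x_{k+N|k}$ be the associated predicted terminal state, the candidate
\begin{equation*}
\tilde{\mathbf{u}}_{k+1}=[u_{k+1|k}^\top,\dots,u_{k+N-1|k}^\top,\kappa(x_{k+N|k})^\top]^\top
\end{equation*}
satisfies the input and state constraints on the first $N-1$ stages by feasibility of $\mathbf{u}_k$ at $x_k$. For the appended entry, Assumption~\ref{ass:cond}.A2 together with the terminal constraint $x_{k+N|k}\in\Xf$ ensures both $\kappa(x_{k+N|k})\in\U$ and $f(x_{k+N|k},\kappa(x_{k+N|k}))\in\Xf\subseteq\X$. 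Hence $\tilde{\mathbf{u}}_{k+1}$ is feasible for $\mathscr{P}(x_{k+1})$.

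Third, I would translate this feasibility back to the reduced problem at $k{+}1$. Setting $\tilde{\mathbf{v}}_{k+1}\doteq T_1^\top\tilde{\mathbf{u}}_{k+1}$ and $\mu_{k+1}=1$, and using the orthonormality identity $T_1T_1^\top+T_2T_2^\top=TT^\top=I$ together with $\tilde{\mathbf{w}}_{k+1}=T_2^\top\tilde{\mathbf{u}}_{k+1}$ as defined in Step~\ref{step:fesupdate} of Algorithm~\ref{alg:reducedMPC}, I get
\begin{equation*}
T_1\tilde{\mathbf{v}}_{k+1}+\mu_{k+1}T_2\tilde{\mathbf{w}}_{k+1}= (T_1T_1^\top+T_2T_2^\top)\tilde{\mathbf{u}}_{k+1}=\tilde{\mathbf{u}}_{k+1},
\end{equation*}
so $g(x_{k+1},T_1\tilde{\mathbf{v}}_{k+1}+\mu_{k+1}T_2\tilde{\mathbf{w}}_{k+1})=g(x_{k+1},\tilde{\mathbf{u}}_{k+1})\leq 0$, which exhibits $(\tilde{\mathbf{v}}_{k+1},1)$ as a feasible point of $\widehat{\mathscr{P}}(x_{k+1},\tilde{\mathbf{w}}_{k+1})$.

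I do not anticipate a real obstacle; the argument is essentially standard once the subspace decomposition is paired with $\mu_{k+1}=1$. The only subtlety is making sure both branches of Algorithm~\ref{alg:reducedMPC} (cost-decrease satisfied vs.\ fallback) are covered uniformly, but both produce a $\mathbf{u}_k$ feasible for $\mathscr{P}(x_k)$, after which the shift argument proceeds identically, so the induction closes cleanly.
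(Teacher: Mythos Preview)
Your proof is correct and follows essentially the same argument as the paper: feasibility of $\widehat{\mathscr{P}}(x_k,\tilde{\mathbf{w}}_k)$ yields a $\mathbf{u}_k$ feasible for $\mathscr{P}(x_k)$, the standard shift-and-append via Assumption~\ref{ass:cond} produces a feasible $\tilde{\mathbf{u}}_{k+1}$ for $\mathscr{P}(x_{k+1})$, and the orthonormal decomposition then exhibits $(\mathbf{v}_{k+1},\mu_{k+1})=(T_1^\top\tilde{\mathbf{u}}_{k+1},1)$ as a feasible point of $\widehat{\mathscr{P}}(x_{k+1},\tilde{\mathbf{w}}_{k+1})$. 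Your version is slightly more explicit in spelling out the identity $T_1T_1^\top+T_2T_2^\top=I$ and in covering both branches of Algorithm~\ref{alg:reducedMPC}, and the paper in turn makes explicit the nominal assumption $x_{k+1}=x_{k+1|k}$, but otherwise the arguments coincide.
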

\begin{proof}
Suppose $\PRk{k}$ is feasible, then there exits a solution $( \mathbf{v}_{k}, \mu_{k})$ that allows to recover $\mathbf{u}_{k } = T_1 \mathbf{v}_{k}+ \mu_{k} T_2 \tilde{\mathbf{w}}_{k}$.  Due to feasibility,  the corresponding predicted state trajectory satisfies  $x_{k+N|k} \in \Xf$. As we consider the nominal setting, i.e., no plant-model mismatch, we have $x_{k+1} = x_{k+1|k}$ at time instant $k+1$. Now we construct $\tilde{\mathbf{u}}_{k+1}$ by shifting $\mathbf{u}_{k}$ as shown in \eqref{eq:recurfeaupdate}. Then, due to  Assumption \ref{ass:cond}, the state sequence corresponding to this guess satisfy $f\big(x_{k+N|k},\kappa(x_{k+N|k})\big) \in \Xf$ and  $\kappa(x_{k+N|k}) \in \U$. Therefore, $\tilde{\mathbf{u}}_{k+1}$ constructed as in \eqref{eq:recurfeaupdate} is feasible in $\mathscr{P}(x_{k+1})$. Hence $\PRk{k+1}$ with $\tilde{\mbf{w}}_{k+1} = T_2^\top \tilde{\mbf{u}}_{k+1}$, is feasible as it admits a feasible solution $(\mathbf{v}_{k+1} = T_1^\top \tilde{\mbf{u}}_{k+1}, \mu_{k+1} =1)$. \PfEnd
\end{proof}
\begin{proposition}[Stability via active subspaces]
Consider the active-subspace NMPC based on \eqref{eq:PR} and Algorithm \ref{alg:reducedMPC} for any orthogonal $T = \begin{bmatrix}
T_1 & T_2
\end{bmatrix}$. Let Assumption \ref{ass:cond} hold and suppose that $\PF$ is feasible for $x(k=0) =x_0$.
Then, the origin $x = 0$ is an asymptotically stable equilibrium of the closed loop and the region of attraction is given by the set of initial conditions $\Xini$ for which $\PF$ is feasible. \End
\end{proposition}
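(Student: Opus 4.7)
The plan is to follow the suboptimal MPC framework of Scokaert, Mayne and Rawlings and use the cost of the actually-applied sequence,
\begin{equation*}
\hat{J}(x_k) \doteq J(x_k,\mathbf{u}_k),
\end{equation*}
as a Lyapunov-type function along the closed loop. Recursive feasibility is already provided: Proposition~\ref{prop:recursive_feasibility} propagates feasibility of $\PR$ from $k$ to $k+1$, and at $k=0$ the full-order $\PFk{0}$ is solved by Algorithm~\ref{alg:reducedMPC} and by hypothesis is feasible, so $(\mathbf{v}_0 = T_1^\top \tilde{\mathbf{u}}_0,\mu_0 = 1)$ is a feasible point for $\PRk{0}$. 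By induction, Algorithm~\ref{alg:reducedMPC} is well-defined for every $x_0\in \Xini$ for which $\PF$ is feasible, which will turn out to be the claimed region of attraction.

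The second step is the descent inequality. The cost decrease condition \eqref{eq:CostDecrease} enforced by Algorithm~\ref{alg:reducedMPC} gives $\hat{J}(x_{k+1})\leq J(x_{k+1},\tilde{\mathbf{u}}_{k+1})$, where $\tilde{\mathbf{u}}_{k+1}$ is the shift-and-append guess from \eqref{eq:recurfeaupdate}. A routine calculation using A2 in Assumption~\ref{ass:cond} (adding and subtracting the terminal cost and exploiting $\Vf(f(x,\kappa(x)))-\Vf(x)\leq -\ell(x,\kappa(x))$) yields
\begin{equation*}
J(x_{k+1},\tilde{\mathbf{u}}_{k+1}) \leq J(x_k,\mathbf{u}_k) - \ell(x_k,u_{k|k}).
\end{equation*}
Combined with A1 this gives $\hat{J}(x_{k+1})-\hat{J}(x_k)\leq -\alpha(\|x_k\|)$.

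The third step is to provide Lyapunov bounds on $\hat{J}$. A1 furnishes the lower bound $\alpha(\|x_k\|)\leq \ell(x_k,u_{k|k})\leq \hat{J}(x_k)$. For the upper bound, I exploit that at $k=0$ the feasible guess coincides with the full-order optimizer, so
\begin{equation*}
\hat{J}(x_0) \leq J(x_0,\tilde{\mathbf{u}}_0) = V(x_0) \leq \beta(\|x_0\|)
\end{equation*}
by A3. Monotonicity of $\hat{J}$ then propagates this bound along the trajectory: $\alpha(\|x_k\|)\leq \hat{J}(x_k)\leq \hat{J}(x_0)\leq \beta(\|x_0\|)$, which yields Lyapunov stability via $\|x_k\|\leq \alpha^{-1}(\beta(\|x_0\|))$. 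Telescoping the descent inequality gives $\sum_{k\geq 0}\alpha(\|x_k\|)<\infty$, hence $x_k\to 0$, i.e., attractivity; together these yield asymptotic stability with the claimed region of attraction.

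The step I expect to be the main obstacle is the upper bound on $\hat{J}$, because, unlike in standard optimal NMPC, $\hat{J}(x_k)$ is not the value function and need not satisfy $\hat{J}(x_k)\leq \beta(\|x_k\|)$ pointwise in $k$. The crucial observation making the argument go through is that Algorithm~\ref{alg:reducedMPC} deliberately seeds $\tilde{\mathbf{w}}_0$ from the full-order optimum at $k=0$, so A3 is needed only at the initial time and is then carried forward by the descent property rather than by a stage-wise bound; this is precisely why $\PFk{0}$ is solved once instead of using an arbitrary feasible guess at initialization.
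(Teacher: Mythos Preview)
Your proof is correct and follows the same suboptimal-MPC route as the paper: the paper's argument simply observes that Algorithm~\ref{alg:reducedMPC} produces feasible suboptimal solutions to \eqref{eq:fullOCP} satisfying the cost-decrease condition \eqref{eq:CostDecrease} and then invokes \cite{Allan17,Scokaert99}, whereas you spell out that citation in detail. Your remark that A3 is needed only at $k=0$---because Step~1 of the algorithm solves $\PFk{0}$ to optimality and monotonicity of $\hat J$ then propagates the bound---is precisely the mechanism behind those cited results.
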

\begin{proof}
In NMPC with terminal constraints, suboptimal solutions lead to stability if the  cost-decrease \eqref{eq:CostDecrease} holds, cf.  \cite[Theorem 14]{Allan17} and \cite{Scokaert99}.
Hence, the assertion follows from the observation that, for any active subspace based on an orthogonal matrix $T = \begin{bmatrix}
T_1 & T_2 \end{bmatrix}$, Algorithm \ref{alg:reducedMPC} provides feasible but suboptimal solutions to \eqref{eq:fullOCP} which also satisfy \eqref{eq:CostDecrease}.
\PfEnd
\end{proof}
Note that in checking the decrease condition \eqref{eq:CostDecrease} one may exploit the time coupling. That is, for the suboptimal feasible case, one can compute the right-hand-side in \eqref{eq:CostDecrease_star} and \eqref{eq:CostDecrease} from the last predicted solution and the evaluation of the one-step cost resulting from the terminal feedback law. An alternative to checking the decrease conditions explicitly is to rely on line-search methods in the numerical solution to the reduced-order OCP which enforces the cost decrease during the iterations. 

It is worth to be remarked that the core idea to establish recursive feasibility is  the projection of  a feasible $\tilde{\mathbf{u}}_k$ onto the inactive subspace. It does not rely on the specific construction of the feasible guess~\eqref{eq:recurfeaupdate} nor on the choice of the subspaces. Let $F: \mathbb{U}^N \to  \mathbb{U}^N$ be a generic  constructor for feasible guesses
\begin{equation}\label{eq:FeasContructor}
 \tilde{\mathbf{u}}_{k+1}= F(\mathbf{u}^\star_k),
\end{equation}
i.e., it maps the predicted optimal control sequence at time step $k$ to feasible guess at time step $k+1$. Examples of such constructions which differ from the usual \textit{appending the terminal feedback at the end} in \eqref{eq:recurfeaupdate} are, e.g., considered in the context of economic MPC~ \cite[Section 4.2]{Faulwasser18c}. Specifically, in \cite{epfl:faulwasser15g} an exact construction based on turnpike properties of OCPs has been proposed. A simple heuristic approximation for economic MPC is to  split $\mathbf{u}_k$ into two pieces defined on $\set{I}_{[m+1, Lm-1]}$ and $\set{I}_{[Lm, Nm] }$ with $L \in \I_{[1,N]}$ and to add the optimal steady-state input in the middle of the horizon, cf.~\cite{Gruene13a}.
In other words, the recursive feasibility result of Propostion~\ref{prop:recursive_feasibility} can be extended to generic feasibility constructions in NMPC.
\begin{corollary}[Feasibility w generic constructors] Consider active-subspace NMPC based on \eqref{eq:PR} and Algorithm \ref{alg:reducedMPC}, wherein \eqref{eq:recurfeaupdate} is replaced with \eqref{eq:FeasContructor} in Step~\ref{step:fesupdate}. Then, the active-subspace NMPC based on \eqref{eq:PR} is recursively feasible.  \End
\end{corollary}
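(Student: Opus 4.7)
The plan is to reuse the structure of the proof of Proposition~\ref{prop:recursive_feasibility} verbatim, replacing the only step that invoked Assumption~\ref{ass:cond} (namely the shift-and-append construction in \eqref{eq:recurfeaupdate}) with an appeal to the abstract feasibility map $F$ from \eqref{eq:FeasContructor}. Concretely, I will argue by induction on the sampling index~$k$, using feasibility of $\widehat{\mathscr{P}}(x_k,\tilde{\mathbf{w}}_k)$ as the inductive hypothesis (the base case is taken care of by Step~1 of Algorithm~\ref{alg:reducedMPC}, which solves the full OCP $\mathscr{P}(x_0)$ to obtain $\tilde{\mbf{u}}_0$).

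For the inductive step I would first record that, whichever branch of the \textbf{if--else} in Algorithm~\ref{alg:reducedMPC} is taken, the applied sequence $\mathbf{u}_k$ is feasible for the full-order problem $\mathscr{P}(x_k)$: either $\mathbf{u}_k = T_1\mathbf{v}_k^\star + \mu_k^\star T_2\tilde{\mathbf{w}}_k$ (which satisfies $g(x_k,\mathbf{u}_k)\leq 0$ by the constraints of $\widehat{\mathscr{P}}(x_k,\tilde{\mathbf{w}}_k)$) or $\mathbf{u}_k = \tilde{\mathbf{u}}_k$ (feasible by inductive construction). Hence $\mathbf{u}_k$ is an admissible argument of the generic constructor, and $\tilde{\mathbf{u}}_{k+1} := F(\mathbf{u}_k)$ is feasible for $\mathscr{P}(x_{k+1})$ by the assumed property of $F$ in \eqref{eq:FeasContructor}. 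Since the setting is nominal, $x_{k+1} = f(x_k, u_{k|k})$ is indeed the state passed to the next OCP.

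It remains to lift this feasibility from the full-order problem back into the active-subspace reduced problem. Here I would use precisely the identity that already powers Proposition~\ref{prop:recursive_feasibility}: orthonormality of $T$ gives $T_1 T_1^\top + T_2 T_2^\top = I$, so
\begin{equation*}
\tilde{\mathbf{u}}_{k+1} = T_1\bigl(T_1^\top \tilde{\mathbf{u}}_{k+1}\bigr) + 1\cdot T_2\bigl(T_2^\top \tilde{\mathbf{u}}_{k+1}\bigr).
\end{equation*}
Setting $\tilde{\mathbf{w}}_{k+1} := T_2^\top \tilde{\mathbf{u}}_{k+1}$, the pair $(\mathbf{v}_{k+1},\mu_{k+1}) = (T_1^\top \tilde{\mathbf{u}}_{k+1},\,1)$ therefore reconstructs $\tilde{\mathbf{u}}_{k+1}$ exactly and satisfies $g(x_{k+1},\,T_1\mathbf{v}_{k+1} + \mu_{k+1}T_2\tilde{\mathbf{w}}_{k+1})\leq 0$, which is feasibility for $\widehat{\mathscr{P}}(x_{k+1},\tilde{\mathbf{w}}_{k+1})$. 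This closes the induction.

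I do not anticipate a genuine obstacle: the argument is essentially bookkeeping, since all the real work is hidden in the postulate that $F$ produces feasible guesses in the high-dimensional OCP. The only subtle point worth stating carefully is that the feasibility of $F(\mathbf{u}_k^\star)$ needs to be invoked with $\mathbf{u}_k$ rather than $\mathbf{u}_k^\star$ when the \textbf{else}-branch of Algorithm~\ref{alg:reducedMPC} is executed; this is legitimate because in that branch $\mathbf{u}_k = \tilde{\mathbf{u}}_k$ is itself a feasible sequence for $\mathscr{P}(x_k)$, so demanding that $F$ be defined on feasible (not only optimal) sequences is a harmless strengthening of \eqref{eq:FeasContructor} that should be flagged in the statement.
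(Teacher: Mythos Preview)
Your proposal is correct and matches the paper's intent: the paper states the corollary without proof, relying on the preceding remark that ``the core idea to establish recursive feasibility is the projection of a feasible $\tilde{\mathbf{u}}_k$ onto the inactive subspace'' and ``does not rely on the specific construction of the feasible guess~\eqref{eq:recurfeaupdate}'', so your argument---rerunning the proof of Proposition~\ref{prop:recursive_feasibility} with \eqref{eq:FeasContructor} in place of \eqref{eq:recurfeaupdate} and using $T_1T_1^\top+T_2T_2^\top=I$ to exhibit the feasible pair $(T_1^\top\tilde{\mathbf{u}}_{k+1},1)$---is exactly what is intended. Your closing caveat that $F$ must accept feasible (not only optimal) inputs is a fair observation that the paper leaves implicit.
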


\begin{remark}[Stability w/o terminal constraints]
In case one wishes to neglect the terminal constraints~\eqref{eq:tercon}, one has different options: (i)  replace the explicit terminal constraint set $\Xf$ by a scaled variant of the terminal penalty, i.e., $\beta\Vf$ with $\beta \gg 1$, see~\cite{Limon06a}. (ii) alternatively, one could remove the terminal constraint set $\Xf$ and increase the prediction horizon, cf.~\cite{tudo:pan21a}. For both approaches the active subspace stability result trivially holds, as the terminal constraint~\eqref{eq:tercon} is still implicitly satisfied. The question of how to guarantee stability of active subspace NMPC without any implicit or explicit consideration of the terminal constraint $\Xf$  could arguably be approached by cost-controllability conditions on the active subspace~\cite{Gruene09a}. A detailed analysis is, however, beyond the scope of this note.  \End
\end{remark}

\section{Computation of Active Subspaces via Global Sensitivities}\label{sec:sensi}
So far we have shown that active-subspace NMPC ensures asymptotic stability independently of the specific choice of the active subspace. It remains to propose a constructive approach to obtain the projectors $T_1$ and $T_2$ which specify the active and the inactive subspaces. To this end, we use global sensitivity analysis as a tool to compute $T_1$ and $T_2$~\cite{Constantine15}.

Let a generic sensitivity function $S: \Xini \to \mathbb{R}^{Nm}$ related to the original optimization problems~\eqref{eq:OCPk}, respectively, \eqref{eq:fullOCP} be given. We consider 
\begin{equation}
\label{eq:pd}
C = \frac{1}{| \Xini |}\int_{\Xini} S(x) S(x)^\top \dx,
\end{equation}
i.e.,  $C$ is the covariance matrix of $S$ over the consider set of initial conditions $\Xini$ and $|\cdot|$ denotes the Lebesgue measure of a set. We note that $C$ is positive semi-definite and symmetric. Thus $C$ admits the non-negative real eigendecomposition,
\begin{equation} \label{eq:eigen_decomposition}
\begin{aligned}
C &= T \Sigma T^\top, \\ 
\Sigma &= \text{diag}(\sigma_1,..,\sigma_{Nm}),\ \sigma_1\geq ...\geq\sigma_{Nm}\geq 0,
\end{aligned} 
\end{equation}
where $T$ is the $Nm\times Nm $ orthonormal matrix spanned by the normalized eigenvectors of $C$ and $\sigma_1\geq \dots\geq\sigma_{Nm}$ are the corresponding eigenvalues. The dimensionality reduction introduced in  Section~\ref{Sec:activesubspace}  means to consider the first $q$ eigenvectors which span the active subspace $\mathcal{AS}$. Naturally, the choice of the dimension $q \in \mathbb{I}_{[1,Nm]}$ is problem-specific and relies on spectrum of $C$. Indeed, if all eigenvalues of $C$ would be identical---which we have not observed in any example---then it might be advisable to consider a different sensitivity function $S$.

The next result is inspired by \cite{Constantine15} and motivates the use of the label \textit{active subspace}. It shows that if the dimension of the active subspace $q$ is chosen such that  $\sum_{i=1}^q \sigma_i \geq \sum_{i=q+1}^{Nm}\sigma_i$, then in average the performance varies much more in $\mathcal{AS}$ than in $\mathcal{IS}$.
Specifically, for $S(x) = \nabla_\mathbf{u} J(x,\mathbf{u}^\star(x))$ the following result holds.

\begin{lemma}
\label{prop:22}
 The mean-squared gradients of $J(x,\mathbf{u}^\star)$ with respect to $\mathbf{v}$ and $\mathbf{w}$ defined in \eqref{eq:InputProjection} satisfy,
 \begin{align*}
    \frac{1}{|\Xini|}\int_{\Xini}\|\nabla_\mathbf{j} J(x,\mathbf{u}^\star(x))\|^2 \dx &=\sum_{i \in \mathcal{I}_\mathbf{j}} \sigma_i,\, ~\mathbf{j} \in\{\mathbf{v}, \mathbf{w} \}, 
 \end{align*} 
 where $\mathcal{I}_\mathbf{v} = \mathbb{I}_{[1,q]}$ and $\mathcal{I}_\mathbf{w} = \mathbb{I}_{[q+1, Nm]}$.\End
\end{lemma}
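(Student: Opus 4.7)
The plan is to exploit the chain rule induced by the orthonormal change of coordinates $\mathbf{u} = T_1 \mathbf{v} + T_2 \mathbf{w}$, rewrite each mean-squared gradient as a trace of a quadratic form in the sensitivity function $S(x)=\nabla_{\mathbf{u}} J(x,\mathbf{u}^\star(x))$, and then recognize the integrand so that the covariance $C$ from \eqref{eq:pd} appears once the integral is brought inside the trace.

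First I would invoke the chain rule. Since $\mathbf{u}$ depends linearly on $(\mathbf{v},\mathbf{w})$ through $T_1, T_2$, we obtain
\begin{equation*}
\nabla_{\mathbf{v}} J(x,\mathbf{u}^\star(x)) = T_1^\top \nabla_{\mathbf{u}} J(x,\mathbf{u}^\star(x)) = T_1^\top S(x),
\end{equation*}
and analogously $\nabla_{\mathbf{w}} J(x,\mathbf{u}^\star(x)) = T_2^\top S(x)$. Using the identity $\|Ay\|^2 = \mathrm{tr}(A^\top A\, y y^\top)$ together with the cyclic property of the trace, one rewrites
\begin{equation*}
\|T_j^\top S(x)\|^2 = \mathrm{tr}\!\left( T_j^\top S(x) S(x)^\top T_j \right), \quad j\in\{1,2\}.
\end{equation*}

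Next I would integrate over $\Xini$, divide by $|\Xini|$, and interchange the (finite-dimensional) trace with the integral by linearity. Recalling \eqref{eq:pd} this produces
\begin{equation*}
\frac{1}{|\Xini|}\int_{\Xini} \|\nabla_{\mathbf{v}} J\|^2 \dx = \mathrm{tr}(T_1^\top C\, T_1),
\end{equation*}
and the analogous expression with $T_2$. To evaluate these traces I substitute the eigendecomposition $C = T\Sigma T^\top$ from \eqref{eq:eigen_decomposition}. Since $T = [T_1\ T_2]$ is orthonormal, the block identities $T^\top T_1 = \begin{bmatrix} I_q \\ 0 \end{bmatrix}$ and $T^\top T_2 = \begin{bmatrix} 0 \\ I_{Nm-q} \end{bmatrix}$ yield $T_1^\top C T_1 = \mathrm{diag}(\sigma_1,\dots,\sigma_q)$ and $T_2^\top C T_2 = \mathrm{diag}(\sigma_{q+1},\dots,\sigma_{Nm})$, whose traces are exactly $\sum_{i\in\mathcal{I}_{\mathbf{v}}}\sigma_i$ and $\sum_{i\in\mathcal{I}_{\mathbf{w}}}\sigma_i$.

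The computational steps are essentially routine linear algebra; the main technical subtlety lies in well-posedness of $S(x) = \nabla_{\mathbf{u}} J(x,\mathbf{u}^\star(x))$ as a measurable and square-integrable map over $\Xini$. This requires (i) a measurable selection $x \mapsto \mathbf{u}^\star(x)$ of optimizers (in case of non-uniqueness) and (ii) differentiability of $J$ in $\mathbf{u}$ in a neighborhood of $\mathbf{u}^\star(x)$ for almost every $x\in\Xini$ so that the gradient is defined $\dx$-a.e.\ and yields an $L^2$ integrand. Under standard regularity on $\ell$, $\Vf$ and $f$ together with the compactness of $\Xini$ this is unproblematic, but it is the only non-algebraic ingredient that must be flagged.
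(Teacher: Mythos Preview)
Your proposal is correct and follows essentially the same route as the paper: both arguments write $\|\nabla_{\mathbf v}J\|^2$ as a trace, use linearity of the trace to pull the integral inside, identify $C$ from \eqref{eq:pd}, and conclude $\mathrm{trace}(T_1^\top C T_1)=\sum_{i=1}^q\sigma_i$. You spell out the chain rule $\nabla_{\mathbf v}J=T_1^\top\nabla_{\mathbf u}J$ and the block identity $T^\top T_1=\begin{bmatrix}I_q\\0\end{bmatrix}$ more explicitly than the paper does, and you flag the measurability/integrability of $S$ which the paper leaves implicit, but the substance is the same.
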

\begin{proof} 
The linearity of the trace operation gives
\begin{align*}
    &\ \frac{1}{|\Xini|}\int_{\Xini}\nabla_\mathbf{v} J(x,\mathbf{u}^\star(x))^\top\nabla_\mathbf{v} J(x,\mathbf{u}^\star(x))\dx  \\
&=\frac{1}{|\Xini|}\int_{\Xini}\text{trace}\big(\nabla_\mathbf{v} J(\cdot)\nabla_\mathbf{v} J(\cdot)^\top\big)\dx \\
            &=\text{trace}\left(T_1^\top\left(\frac{1}{|\Xini|}\int_{\Xini}\nabla_\mathbf{u} J(\cdot)\nabla_\mathbf{u} J(\cdot)^\top\dx\right)T_1\right)      \end{align*}
The last expression is equivalent to $\text{trace}(T_1^\top C T_1) \, = \sum_{i=1}^q \sigma_i$, hence we obtain the assertion for $\mathbf{j} = \mathbf{v}$. The case  $\mathbf{j} = \mathbf{w}$ follows similarly.
\PfEnd
\end{proof}

To the end of computing an approximation of the matrix $C$, consider a randomly chosen set of samples 
\[
\Xtrain \doteq \left\{x^j \in \Xini, j =1,...,N_{\text{train}}\right\}.
\]
Then a numerically tractable approximation of $C$ from \eqref{eq:pd} is given by 
\begin{equation} \label{eq:Chat}
\widehat C = \frac{1}{N_{\text{train}}} \sum_{j=1}^{N_{\text{train}}} S(x^{j})S(x^j)^\top.
\end{equation}
Algorithm~\ref{alg:T1construction} summarizes the computation of $T_1$ and $T_2$ for a given sensitivity function $S: \Xini \to \mathbb{R}^{Nm}$. It also shows how for given $q\in \mathbb{N}$  the active subspace $\mathcal{AS}$ is obtained.

\begin{algorithm}[t]
	\caption{Computation of projectors $T_1$ and $T_2$ }\label{alg:T1construction}
	\algorithmicrequire  $q\in \mathbb{I}_{[1, Nm]}$, $\Xtrain$, $S: \Xini \to \mathbb{R}^{Nm}$
	~\quad \\\algorithmicensure $T_1$, $T_2$
	\begin{algorithmic}[1]
		\State For $j =1,...,N_{\text{train}}$ get $x_j \in \Xtrain$
		\State Solve $\mathscr{P}(x^j)$ from \eqref{eq:fullOCP} to evaluate $S(x^j)$, cf. Remark~\ref{rem:choiceS}
		\State Construct $\widehat C $ via \eqref{eq:Chat} 
		\State Obtain $T$ by eigendecomposition of $\widehat C$, cf. \eqref{eq:eigen_decomposition}
		\State Return $T_1$ as the first $q$ columns of $T$ corresponding to the largest $q$ eigenvalues of $\widehat C$ and $T_2$ as the other $Nm-q$ columns.
	\end{algorithmic}
\end{algorithm}

\begin{remark}[Choice of $S(x)$] \label{rem:choiceS}
Observe that  recursive feasibility of the proposed active-subspace NMPC does not depend on the actual choice of the subspace. Hence, the design of $S(x)$---which in turn determines $T$---is a degree of freedom. 
For example, with $S(x)=u^\star(x)$, our generic active-subspace framework includes SVD with respect to $u^\star(x)$ as suggested by \cite{Bemporad20,Unger12}. Moreover, with $S(x)= I$ and $q< N$ (supposing $\dim u = m =1$), our approach is closely linked---yet not identical---to a move blocking structure, i.e., the first few control steps are considered most important.

Moreover, we remark that for unconstrained problems and for cases in which  no constraint is active, optimality implies $\nabla_{\mathbf{u}}J(x,\mathbf{u}^\star)=0$. Hence, if only a few sampled OCPs admit active constraints, the choice $S(x) = \nabla_\mathbf{u} J(x,\mathbf{u}^\star(x))$ does likely not provide sufficient information. In this case one may include the subspace spanned by corresponding local LQR solutions obtained for the linearised system at the equilibrium in the active subspace $\mathcal{AS}$.  \End
\end{remark}

\section{Simulation Results}\label{Sec:results}
We apply the proposed active-subspace approach to three examples: the longitudinal beam dynamics of a synchrontron~\cite{Faulwasser14p} (i.e. linear stabilizing MPC), a robot with 2 Degrees of Freedom (2-DoF)~\cite{Siciliano10} (i.e.  stabilizing NMPC), and a chemical reactor (i.e. economic NMPC)~\cite{epfl:faulwasser15g}.

In all numerical results reported below, the initial feasible control sequences $\tilde{\mathbf{u}}_0$ in Algorithm~\ref{alg:reducedMPC} are obtained solving a variant of \eqref{eq:fullOCP} with a constant objective function instead of \eqref{eq:condensedObj}. 
Moreover, we illustrate the efficacy of the proposed scheme by comparing memory footprint and computation time among the full-order OCP~\eqref{eq:OCPk}, its condensed form~\eqref{eq:fullOCP}, and the reduced-order OCP \eqref{eq:PR}. To foster comparability, these OCPs are all initialized  with the same feasible control sequence $\tilde{\mbf{u}}=T_1\tilde{\mbf{v}}+T_2\tilde{\mbf{w}}$, i.e., we initialize \eqref{eq:PR} with $\mbf{v}=\tilde{\mbf{v}}$ and $\mu=1$ accordingly.
The computations are done on a virtual machine with 8 virtual CPUs with 2.1 GHz, 8 GB of RAM, and MATLAB R2019a. The linear-quadratic example uses  CasADi (v3.5.5)~\cite{Andersson19} and the QP solver OSQP~\cite{osqp}; while IPOPT~\cite{ipopt}  is considered for the non-convex problems.
\subsection{Linear Synchrotron Example}
\begin{table*}[t]
	\begin{center}
		\caption{Synchrotron example: relative performance difference $\Delta$ \eqref{eq:performanceLoss}  with $N_\text{train}=120$, $N_{x0}=100$, and $Nm=120$.  $N_\text{fea}$ denotes the number of feasible initial conditions, $\bar N$ is the  shortened horizon, $M$ stands for mean and $SD$ for standard deviation. }\label{tab:PerformanceLinear}
		\begin{tabular}{ccccccc||cccc}	
				\toprule
				\multirow{2}{*}{\# dec.}	&\multicolumn{2}{c}{$S= I $} & \multicolumn{2}{c}{ $S= \mathbf{u}^\star $}   &  \multicolumn{2}{c||}{  $S=   \nabla_{\mathbf{u}}J(x,\mathbf{u}^\star)  $} & \multicolumn{4}{c}{ Short-horizon MPC} \\
				&  M (\%)	& SD (\%)& M (\%)& SD (\%) & M (\%)& SD (\%) &$\bar{N}$ & $\frac{N_\text{fea}}{N_\text{x0}}$ (\%)  &M (\%) & SD (\%)	\\
				\midrule
				6 & 2.7$\times 10^{-2}$& 0.14&	2.3$\times 10^{-2}$ &	4.7$\times 10^{-2}$& 2.3$\times 10^{-2}$  &	 4.6$\times 10^{-3}$ &3&  63 &3.7$\times 10^{-3}$ &	5.6$\times 10^{-2}$\\
				12 & 1.4$\times 10^{-2}$&	9.4$\times 10^{-2}$ &	 6.5$\times 10^{-3}$&	1.6$\times 10^{-2}$ & 6.7$\times 10^{-3}$&	1.4$\times 10^{-2}$   & 6& 79 &9.4$\times 10^{-4}$ &	5.5$\times 10^{-5}$\\
				24 & 	3.4$\times 10^{-3}$& 2.4$\times 10^{-2}$ &		8.0$\times 10^{-3}$& 1.8$\times 10^{-2}$ &	4.8$\times 10^{-4}$& 4.4$\times 10^{-3}$ 
				&12&92& 8.5$\times 10^{-3}$ &	2.6$\times 10^{-2}$ 
				\\
				\bottomrule
			\end{tabular}
		\end{center}
	\end{table*}
We consider a linear system $x^+= Ax +Bu$ with system matrices
\[
A = \left[\begin{smallmatrix} 
	1.01 & -0.039 & 0 & 0 \\
	-0.29 & 1.01 & 0 & 0 \\
	0 & 0 & 1.06 & -0.080 \\
	0 & 0 & -1.64 & 1.06\\
\end{smallmatrix}\right],~
B = \left[\begin{smallmatrix} -0.0056 & 0 \\ 0.29 &0  \\ 0 & 0.0023 \\ 0& -0.058 \end{smallmatrix}\right],
\]
which is obtained from the exact discretization with sampling interval $h = \SI{2.66}{\mu s}$ of the continuous-time longitudinal beam dynamics of a synchrotron given in~\cite{Faulwasser14p}. The inputs and states are restricted by $\mbb{U}= [-0.5,\ 0.5] \times [-0.1,\ 0.1] $, $\mbb{X} = [-0.32,\ 0.32] \times [-0.4,\ 0.4]\times  [-0.1,\ 0.1] \times  [-0.46,\ 0.46].$

The aim is to stabilize the system at the origin with the quadratic stage cost $\ell(x,u) = \frac{1}{2} (\|x\|_Q^2+\|u\|_R^2)$, $Q=\diag(2, 8, 2,8)$, $R = \diag(0.9, 0.9)$, and the terminal cost $\Vf(x_{k+N|k})=\|x_{k+N|k}\|_P^2$ with $P$ as the unique solution to the algebraic Riccati equation. With horizon $N=60$, there are $Nm=120$ input decision variables for the full-order problem~\eqref{eq:OCPk}.  The terminal region $ \mbb{X}_{\text{f}}=\{x\in \X \, | \,  A_{\text{f}}x\leq b_\text{f}\}$ is constructed to be the maximal positive invariant set for the autonomous system $x_{k+1}= (A+B K_{\text{LQR}})x_k$ with LQR feedback gain $K_{\text{LQR}}$, $x_k \in \X$, and $K_{\text{LQR}} x_k\in \mbb{U}$, cf. command LQRSet() in MPT3 Toolbox \cite{MPT3}.

After reformulating \eqref{eq:OCPk} to \eqref{eq:fullOCP}, it is well known that the condensed problem~\eqref{eq:fullOCP} is ill-conditioned for unstable linear systems \cite{Maciejowski02}. Thus, we prestabilize the linear system by setting
\[
u_{k+i|k} = K_{\text{LQR}}( x_{k+i|k} - \bar{x}) + u^c_{k+i|k}  
\]
and employ $\mathbf{u}^c_k \doteq [u_{k|k}^{c\top}, ..., u_{k+N-1|k}^{c\top}]^\top$ as the new decision variables of \eqref{eq:fullOCP}. Moreover, the active subspace method is exploited to reduce the dimensionality of $\mathbf{u}^c$, cf. Sections \ref{Sec:activesubspace} and \ref{sec:sensi}. 
With $N_{\text{train}}=120$ samples in \eqref{eq:Chat}, we learn active subspaces for different $S(x)$, cf. Algorithm~\ref{alg:T1construction} and Remark~\ref{rem:choiceS}.

To compare the performances of active-subspace MPC and full-order MPC, we sample $N_{x0}=100$ random initial conditions $ x^j \in \Xini,\,j\in \mathbb{I}_{[1,N_{x0}]}$. For these samples we evaluate the relative closed-loop performance difference
\begin{equation}\label{eq:performanceLoss}
	\textstyle \Delta =  \frac{1}{N_{x0}} \sum_{j=1}^{N_{x0}} \left(\widehat{J}_\text{c}(x^j)-J_\text{c}\left(x^j\right)\right)\cdot \left(J_\text{c}(x^j)\right)^{-1},
\end{equation}
where $\widehat{J}_\text{c}$ denotes the closed-loop performance for active-subspace MPC and $J_\text{c}$ is the closed-loop performance for full-order MPC. Furthermore, to illustrate the advantages of using active subspace rather than simply directly reducing the prediction horizon, we consider the closed-loop performance losses of short-horizon MPC problems in comparison to the original full-order problem with long horizon. The short-horizon MPCs are constructed by shrinking the horizon of the original full-order MPC such that the same number of decision variables as in the reduced-order problems is attained.

	\begin{figure}[t]
		\includegraphics[width=0.85\columnwidth]{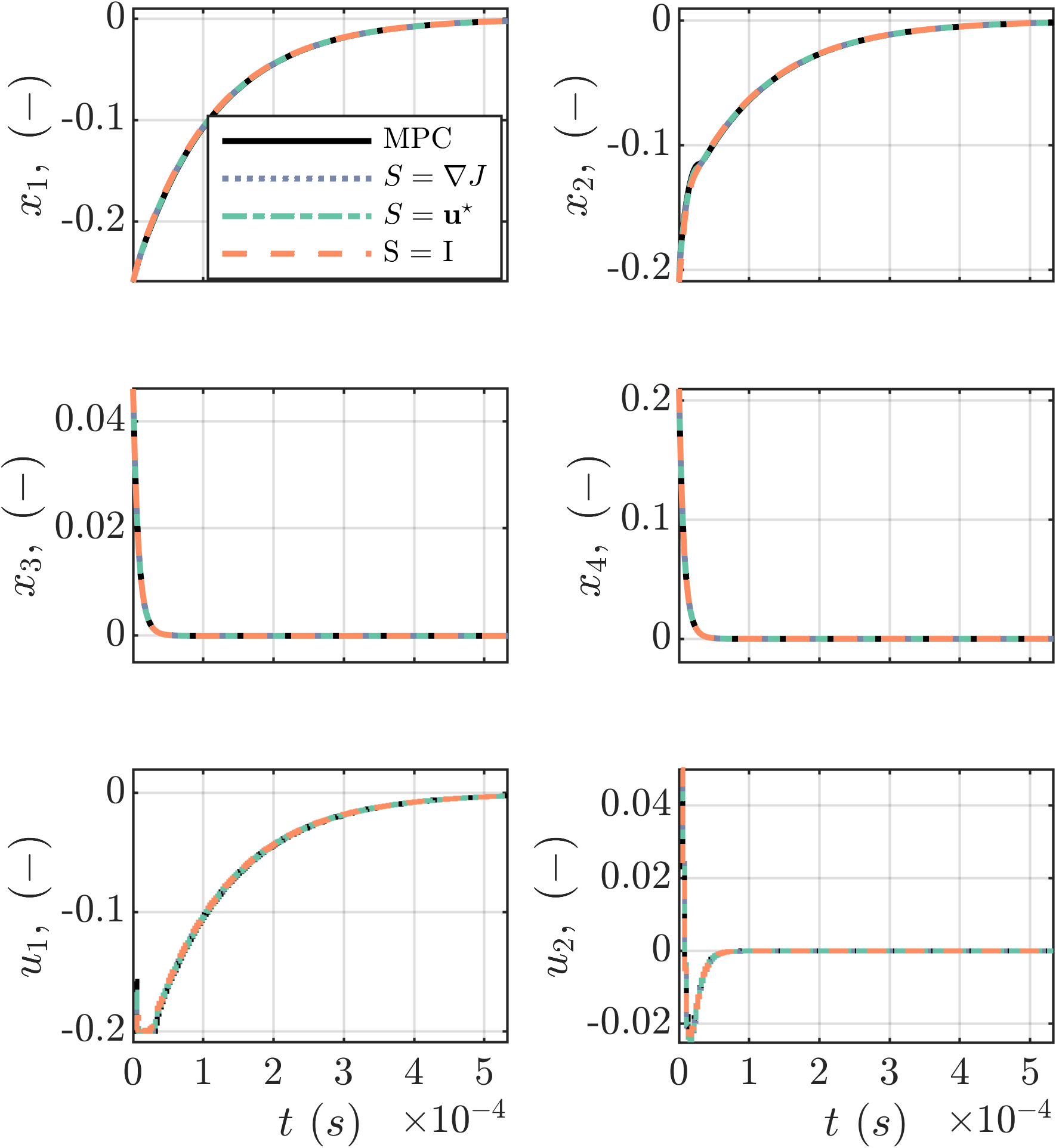}
		\caption{Synchrotron example: closed-loop trajectories for different choices of $S(x)$ with \# dec. = 6, and $x_0 = [0.26, -0.21,$ $0.046,0.21]^\top $. }\label{fig:linearTrajectories}
	\end{figure}

Table \ref{tab:PerformanceLinear} depicts the averaged closed-loop performance loss  for different choices of $S(x)$ and the corresponding short-horizon MPCs. We observe that even with a drastically reduced number of decision variables, i.e. $6$ compared to $Nm=120$, the performance loss of the proposed scheme is minuscule  ($10^{-2} \%$). As depicted in Figure~\ref{fig:linearTrajectories}, with $6$ decision variables the closed-loop trajectories of the reduced-order problems with different choices of $S(x)$ show no obvious deviation from the full-order MPC trajectory.  Note that with the chosen terminal cost, i.e. $P$ as the unique solution of the algebraic Riccati equation, the solutions of MPC with any prediction horizon align with the infinite-horizon MPC if the terminal constraints are not active. Therefore, we see the performance losses of short-horizon MPCs are also relatively small, which indicates that the terminal constraints are not frequently active in this case. However, the short-horizon MPCs encounter infeasibility for some initial conditions. The percentage of feasible OCPs for the short horizon MPC is denoted in the column $\frac{N_\text{fea}}{N_\text{x0}}$. We remark that the active subspaces schemes inherit the feasibility of the original full-order MPC, i.e., for them we have that $\frac{N_\text{fea}}{N_\text{x0}}[\%] = 100[\%]$, cf. Proposition~\ref{prop:recursive_feasibility}.

	\begin{table}[t]
	\begin{center}
		\caption{Synchrotron example: memory footprint and relative computation time of different OCPs with $100$ initial conditions and $10$ runs per initial condition, $S=  \nabla_{\mathbf{u}}J(x,\mathbf{u}^\star)$, solver: OSQP}\label{tab:CompTimeLinear}
		\begin{tabular}{cccccc} 	
			\toprule
			\multirow{2}{*}{OCP}	 & \multirow{2}{*}{ \# dec.} & \multirow{2}{*}{\# Jac.} & \multirow{2}{*}{\# Hes.}  &     \multicolumn{2}{c}{ $T_{com}/ T_{(5)}$  }  \\
			& & & & M (\%) & SD (\%)	\\
			\midrule
			\eqref{eq:fullOCP}	&120 & 24000 &	7260		& 100 &0 \\
			\eqref{eq:OCPk} &364 &2612 &2176		& 20.1 &	4.30\\
			\eqref{eq:PR}&	6 &2292&36&	12.2&	3.58\\
			\eqref{eq:PR}&12& 4584	&144&	15.3&	4.01\\ 
			\eqref{eq:PR}&24 &9168	&576		&	24.7	&5.86	\\
			\bottomrule
		\end{tabular}
	\end{center}
\end{table}

\begin{table*}[t]
	\begin{center}
		\caption{2-DoF robot: relative performance difference $\Delta$ \eqref{eq:performanceLoss} with $N_{\text{train}}=120$, $N_{x0}=20$, and $Nm=120$.    $N_\text{fea}$ denotes the number of feasible initial conditions, $\bar N$ is the  shortened horizon, $M$ stands for mean and $SD$ for standard deviation. }\label{tab:PerformanceRobot}
		\begin{tabular}{c cccccc|| c c c c}	
			\toprule
			\multirow{2}{*}{ \# dec.} 	&\multicolumn{2}{c}{$S= I $} & \multicolumn{2}{c}{ $S= \mathbf{u}^\star $}   &  \multicolumn{2}{c||}{ $S=   \nabla_{\mathbf{u}}J(x,\mathbf{u}^\star)  $ }& \multicolumn{4}{c}{ Short-horizon NMPC}   \\
			&  M (\%)	& SD (\%) & M (\%)& S (\%) & M (\%) & SD (\%) & $\bar{N}$ &$\frac{N_\text{fea}}{N_\text{x0}}$ (\%) & M (\%) & SD (\%)	\\
			\midrule
			6   &14.1 & 9.52 & 11.9 & 10.8 &6.23 & 8.62& 3&0& \multicolumn{2}{c}{no feasible}\\
			12  &9.30& 8.10 & 2.67 & 4.54 &2.58 & 6.17 &6& 0 &\multicolumn{2}{c}{solutions}\\ 
			24 & 5.60& 6.75 &0.53 &1.34	& 1.28& 4.32 & 12& 0& \multicolumn{2}{c}{found}\\
			\bottomrule
		\end{tabular}
	\end{center}
\end{table*}

Table~\ref{tab:CompTimeLinear} lists the memory footprint and the computation time for evaluating different OCPs, i.e. the full-order OCP~\eqref{eq:OCPk}, the condensed OCP~\eqref{eq:fullOCP}, and the reduced-order OCP~\eqref{eq:PR}. Here \# dec. is the number of decision variables, \# Jac. is the number of nonzeros in constraint Jacobians, and \# Hes. is the number of nonzeros in the Hessian of the objective. Moreover, $T_{com}/T_{(5)}$ denotes the relative computation time of different OCPs compared to OCP~\eqref{eq:fullOCP} for each initial condition.
 Due to the sparse structure of OCP~\eqref{eq:OCPk}, although with more decision variables, its Jacobian and Hessian contain less non-zero elements than the condensed OCP~\eqref{eq:fullOCP}.
Since the QP solver OSQP exploits sparsity, we observe a reduction of computation time about $80\%$ for the non-condensed OCP~\eqref{eq:OCPk} compared to OCP~\eqref{eq:fullOCP}.
Though the reduced-order OCP~\eqref{eq:PR} loses  sparsity, the memory footprint with small (reduced) dimension is comparable to \eqref{eq:OCPk}. Moreover, with $6$ decision variables, the computation time of the reduced-order OCP is about $60\%$ of the non-condensed full-order OCP~\eqref{eq:OCPk}.

\subsection{Nonlinear 2-DoF Robot Example}

The dynamics of a 2-DoF robot are 
\begin{equation}\label{eq:2dofrobot}
B(\theta)\ddot{\theta} +C(\theta,\dot{\theta})\dot{\theta}+g(\theta) = u,
\end{equation}
with joint angles $\theta=[\theta_1, \theta_2]^\top \in \mathbb{R}^2$ in [rad] and input torques $u=[u_1,u_2]^\top \in \mathbb{R}^2$ in [$\SI{}{Nm}$]. The details of the coefficient matrices  and the system parameters are given in \cite{Siciliano10}. We collect $(\theta_1,\theta_2,\dot\theta_1,\dot\theta_2)^\top$ in the state vector $x$. States and inputs are subject to the constraints $u\in [-1000, 1000]$ $\SI{}{Nm}$ and $\dot \theta \in [-\frac{3}{2} \pi, \frac{3}{2} \pi]$ $\SI{}{rad/s}$. We obtain a discrete time system by considering the fixed step size $\SI{0.05}{s}$ and a 4th-order Runge-Kutta scheme.

The control task is to stabilize the system at $\bar{x} = [\frac{\pi}{2}, 0, 0, 0]^\top$ and $\bar{u} = [ 0, 0]^\top$; thus we specify the stage cost as $\ell(x,u) = \frac{1}{2} (\|x-\bar{x}\|_Q^2+\|u-\bar{u}\|_R^2)$ with $Q=\diag(5, 1, 0.5,0.5)$, $R = \diag(1, 1)$. With the prediction horizon $N=60$, there are $Nm=120$ input decision variables for the full-order problem~\eqref{eq:OCPk}.  Moreover, we employ a terminal equality constraint $x_{k+N|k}=\bar x$ to ensure feasibility and stability.

\begin{figure}[t]
	\includegraphics[width=0.85\columnwidth]{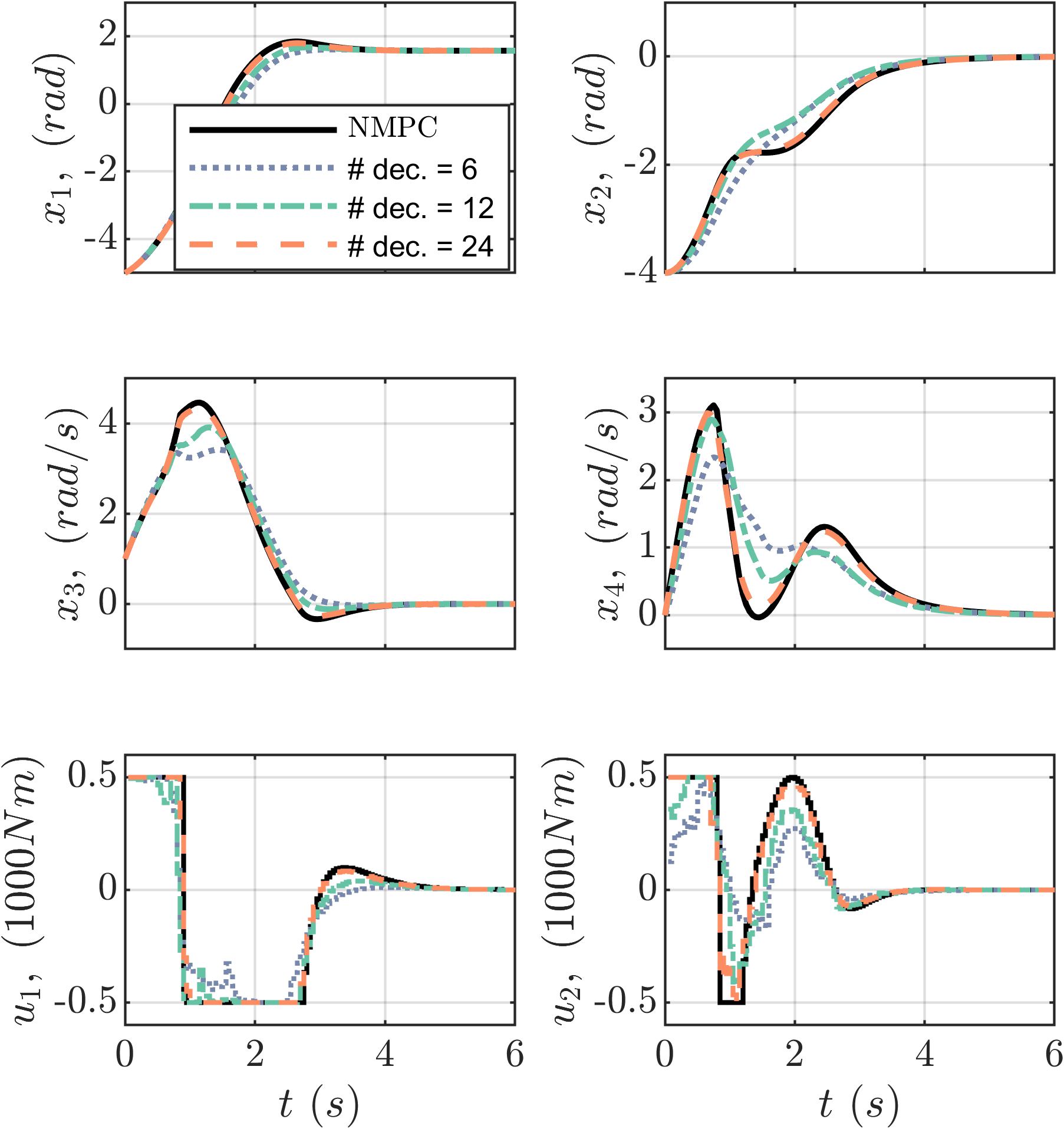}
	\caption{2-DoF robot: closed-loop robot trajectories for $S=\nabla_\mathbf{u}J(x,\mathbf{u}^\star)$ with $q+1=6,12,24$ and $x_0 = [-5, -4, $ $1, 0]^\top $. }\label{fig:RobotTrajectories}
\end{figure}  

\begin{table}[t]
	\begin{center}
		\caption{2-DoF robot: memory footprint and relative computation time of different OCPs with $N_{x0}=20$, $Nm=120$, and $S=  \nabla_{\mathbf{u}}J(x,\mathbf{u}^\star)$, solver IPOPT.  }\label{tab:CompTimeRobot}
		\begin{tabular}{cccccc}	
			\toprule
			\multirow{2}{*}{OCP} &\multirow{2}{*}{ \# dec.} 	 & \multirow{2}{*}{\# Jac.} & \multirow{2}{*}{\# Hes.}   &    \multicolumn{2}{c}{ $T_{com}/ T_{(5)}$  }  \\
			& & &  & M (\%) & SD (\%)	\\
			\midrule
			\eqref{eq:fullOCP} &	120&7920&	7260& 100 &0 \\
			\eqref{eq:OCPk} &	364&1928 &	1270 &6.01 &	1.04
			\\
			\eqref{eq:PR} &6	& 1464 &	21	 & 12.8&	3.06
			\\
			\eqref{eq:PR}&12  &2928	&78		 &20.2&	5.79
			\\ 
			\eqref{eq:PR}&24 &5856&	300	 & 35.2 &	7.87
			\\
			\bottomrule
		\end{tabular}
	\end{center}
\end{table}
Figure \ref{fig:RobotTrajectories} shows closed-loop trajectories for the initial condition $x_0 = [-5, -4, 1, 0]^\top$---for different numbers of decision variables \# dec.$=6,12,24$ obtained for $S(x) = \nabla_{\mathbf{u}}J(x,\mathbf{u}^\star)$---and the solution of the full-order NMPC. As one can see, all trajectories converge to the steady state after $t=\SI{4}{s}$. Specifically, the trajectories for $q+1=24$ show no obvious deviation from the full-order NMPC trajectories.

With $N_{\text{train}}=120$ and $N_{x0}=20$, we summarize the  performance differences for different choices of $S(x)$ in Table~\ref{tab:PerformanceRobot}. With the proposed active-subspace NMPC scheme ($S= \nabla_{\mathbf{u}}J(x,\mathbf{u}^\star)$), one  achieves the performance loss of $2.6\%$ with $12$ decision variables, i.e.,  dimensionality reduction by factor $10$.
Moreover, with $24$ decision variables, the approach with $S= \mathbf{u}^\star$ reaches around $0.5\%$ performance loss compared to $5\%$ performance loss using $S= I$. Observe that no feasible solution is found for the corresponding short-horizon NMPCs with the same number of decision variables.

Moreover, we compare the memory footprint and the relative computation time of the different OCPs in Table~\ref{tab:CompTimeRobot}. Since the solver IPOPT does utilize sparsity to accelerate computation, the non-condensed OCP~\eqref{eq:OCPk} is solved fastest. Although \eqref{eq:PR} is condensed,  we remark that the proposed scheme with $12$ decision variables achieves comparable computation time and less memory footprint than OCP~\eqref{eq:OCPk}. As indicated before, the closed-loop performance loss in this case is around $2.6\%$.

\subsection{Nonlinear Van de Vusse Reactor}

\begin{table*}[t]
	\begin{center}
		\caption{ CSTR example: relative performance difference $\Delta$ \eqref{eq:performanceLoss} with $N_{x0}=20$ and $Nm=200$.  $N_\text{fea}$ denotes the number of feasible initial conditions,  $\bar N$ is the  shortened horizon, $M$ stands for mean and $SD$ for standard deviation.  }\label{tab:PerformanceCSTR}
		\begin{tabular}{c cccccc||c c  cc}	
				\toprule
				\multirow{2}{*}{\# dec.}	&\multicolumn{2}{c}{$S= I $} & \multicolumn{2}{c}{ $S= \mathbf{u}^\star $}   &  \multicolumn{2}{c||}{ $S=   \nabla_{\mathbf{u}}J(x,\mathbf{u}^\star)  $} & \multicolumn{4}{c}{ Short-horizon NMPC}   \\
				&  M (\%)	& SD (\%)& M (\%)& SD (\%) & M (\%) & SD (\%) & $\bar{N}$ &$\frac{N_\text{fea}}{N_\text{x0}}$ (\%) & M (\%) & SD (\%)	\\
				\midrule
				5 & 2.2$\times 10^{-2}$	& 2.7$\times 10^{-2}$	&	3.8$\times 10^{-2}$	 &	2.4$\times 10^{-2}$ & 0.38&	0.37& 3 & 100& 20.7 &	2.12\\
				10  &2.9$\times 10^{-3}$& 4.8$\times 10^{-3}$ &2.6$\times 10^{-3}$&	6.1$\times 10^{-3}$& 1.5$\times 10^{-2}$&	1.2$\times 10^{-2}$ &5& 100& 10.9 &	1.04
				\\ 
				20& 5.1$\times 10^{-4}$&	9.2$\times 10^{-4}$&	2.5$\times 10^{-4}$&	8.0$\times 10^{-4}$&	8.3$\times 10^{-4}$&9.6$\times 10^{-4}$ & 10&100& 1.06&	8.6$\times 10^{-2}$
				\\
				\bottomrule
			\end{tabular}
		\end{center}
	\end{table*}

	\begin{table}[t]
		\begin{center}
			\caption{CSTR example: memory footprint and relative computation time of different OCPs with $N_{x0}=20$, $Nm=200$, and $S=  \nabla_{\mathbf{u}}J(x,\mathbf{u}^\star)$), solver IPOPT.}\label{tab:CompTimeCSTR}
			\begin{tabular}{cccccc}	
				\toprule
				\multirow{2}{*}{OCP}& \multirow{2}{*}{ \# dec.} 	 & \multirow{2}{*}{\# Jac.} & \multirow{2}{*}{\# Hes.}     & \multicolumn{2}{c}{ $T_{com}/ T_{(5)}$  }  \\
				& & & & M (\%) & SD (\%)\\
				\midrule
				\eqref{eq:fullOCP}&200  & 30500&	20100		& 100 &0 \\
				\eqref{eq:OCPk}&	604	 &3004 &	1500  &	2.99 & 0.33\\
				\eqref{eq:PR} &	5 &2500 &	15 &	2.81&	0.45	
				\\
				\eqref{eq:PR}&	10 &5000	&55&	5.56&	0.90
				\\ 
				\eqref{eq:PR}&	20 &10000 &	210&	11.7	&1.38	\\
				\bottomrule
			\end{tabular}
		\end{center}
	\end{table}

We consider \edit{the partial model of} Van de Vusse Continuous Stirred Tank Reactor (CSTR) in which the reactions $A \xrightarrow{k_1} B\xrightarrow{k_2} C $, $2A \xrightarrow{k_3} D$ take place \cite{Rothfuss96}.
The concentrations $c_A, c_B$ [$\SI{}{mol/m^3}$] and  the reactor temperature $ T$  [$\SI{}{^\circ C}$] satisfy the dynamics
\begin{subequations} \label{eq:sys_vanVuss}
	\begin{align}
	\dot c_A & = r_A + (c_{\text{In}}-c_A)u_1 , \quad \dot c_B  = r_B  -c_Bu_1, \\
	\dot  T  & = h + \alpha(u_2- T) + ( T_{\text{In}}- T)u_1, 
	\end{align}
where 
\begin{align*}
	r_A	   &=-k_1c_A -2k_3c_A^2, 	\quad r_B= k_1 c_A- k_2 c_B ,\\ 
	 h &= -\delta\Big( k_1 c_A \Delta H_{AB} + k_2 c_B \Delta H_{BC}  + 2k_3c_A^2 \Delta H_{AD} \Big), 	
\end{align*}
\end{subequations}
and $k_i = k_{i0}\exp{\dfrac{-E_i}{ T+ T_0}}, \, i = 1,2,3$.
The inputs $u_1, u_2$ are the normalized flow rate through the
reactor [$\SI{}{1/h}$] and the temperature in the cooling jacket [$\SI{}{^\circ C}$], respectively. The system parameters can be found in \cite{Rothfuss96}.  We consider the states $x= [c_A,c_B,T]^\top$ and the inputs $u =[u_1,u_2]^\top$ with constraints
$c_A \in [0, 6000] \frac{\text{mol}}{\text{m}^3}$,  $c_B \in [0, 4000]\frac{\text{mol}}{\text{m}^3}$, ~  $T\in [70, 200]^\circ \text{C}$,
 $u_1 \in [3, 35]\frac{1}{\text{h}}$,   ~$u_2 \in [100, 200]^\circ \text{C}$. In comparison to \cite{Rothfuss96} and similar to \cite{epfl:faulwasser15g} we omit the dynamics of the cooling jacket.   
Moreover, we rescale inputs and states to avoid numerical difficulties; precisely, $c_A$ and $c_B$ are scaled by $10^{-3}$, $u_2$ and $T$ by $10^{-2}$, and $u_1$ by $10^{-1}$.  We use the sampling time \edit{ $2\times 10^{-3}\SI{}{h}$ }and a 4th-order explicit Runge-Kutta scheme to approximate  \eqref{eq:sys_vanVuss} as a discrete-time system. The control task is to optimize the amount of $B$ produced; hence we choose $\ell(x,u)= -c_Bu_1 +  \epsilon u^\top u$ where the second term with $\epsilon = 10^{-3}$ is a small Tikhonov regularization to avoid singular arcs in the optimal solution. The optimal steady state pair (in scaled coordinates) corresponding to $\ell$ reads
$
\bar{x} =  [2.1753,1.1051,1.2849]^\top$, $\bar{u} = [3.5,1.4268]^\top$.

Figure \ref{fig:CSTRturnpike} depicts several open-loop solutions to the full-order OCP for different initial conditions and the horizon $N= 100$. As one can see, the optimal solutions exhibit the turnpike phenomenon, see~\cite{tudo:faulwasser22a}.  Specifically, $u_2^\star$ stays close to $\bar{u}_2$ during the middle part of the horizon. We note that since $u_1^\star(k)\equiv \bar{u}_1$, it is not plotted. 
Notice that we neither employ a terminal cost function nor a terminal constraints for this example as both would potentially eliminate the turnpike leaving arc. Here, however, we are interested in illustrating the performance of the proposed scheme despite the leaving arc. 
The construction of (approximately feasible) input guesses now exploits the turnpike phenomenon, i.e., we split $\mathbf{u}_k$ into two pieces and add $\bar{u}$ in the middle of the horizon, cf. Section~\ref{Sec:activesubspace}. Moreover, we leverage the turnpike phenomenon to shift the inputs
\[
u_{k+i|k} = \bar{u} + u^c_{k+i|k},
\]
where $\bar u$ is optimal steady state input from above. 
Specifically, we employ $\mathbf{u}^c_k \doteq [u_{k|k}^{c\top}, ..., u_{k+N-1|k}^{c\top}]^\top$ as the new decision variables of \eqref{eq:fullOCP}. Similar to the linear example, we apply the proposed active subspace method based on OCP~\eqref{eq:fullOCP} with $\mathbf{u}^c_k$. This way, we focus on the active subspaces that deviate from the turnpike trajectory.	
	
	\begin{figure}[t]
		\includegraphics[width=0.85\columnwidth]{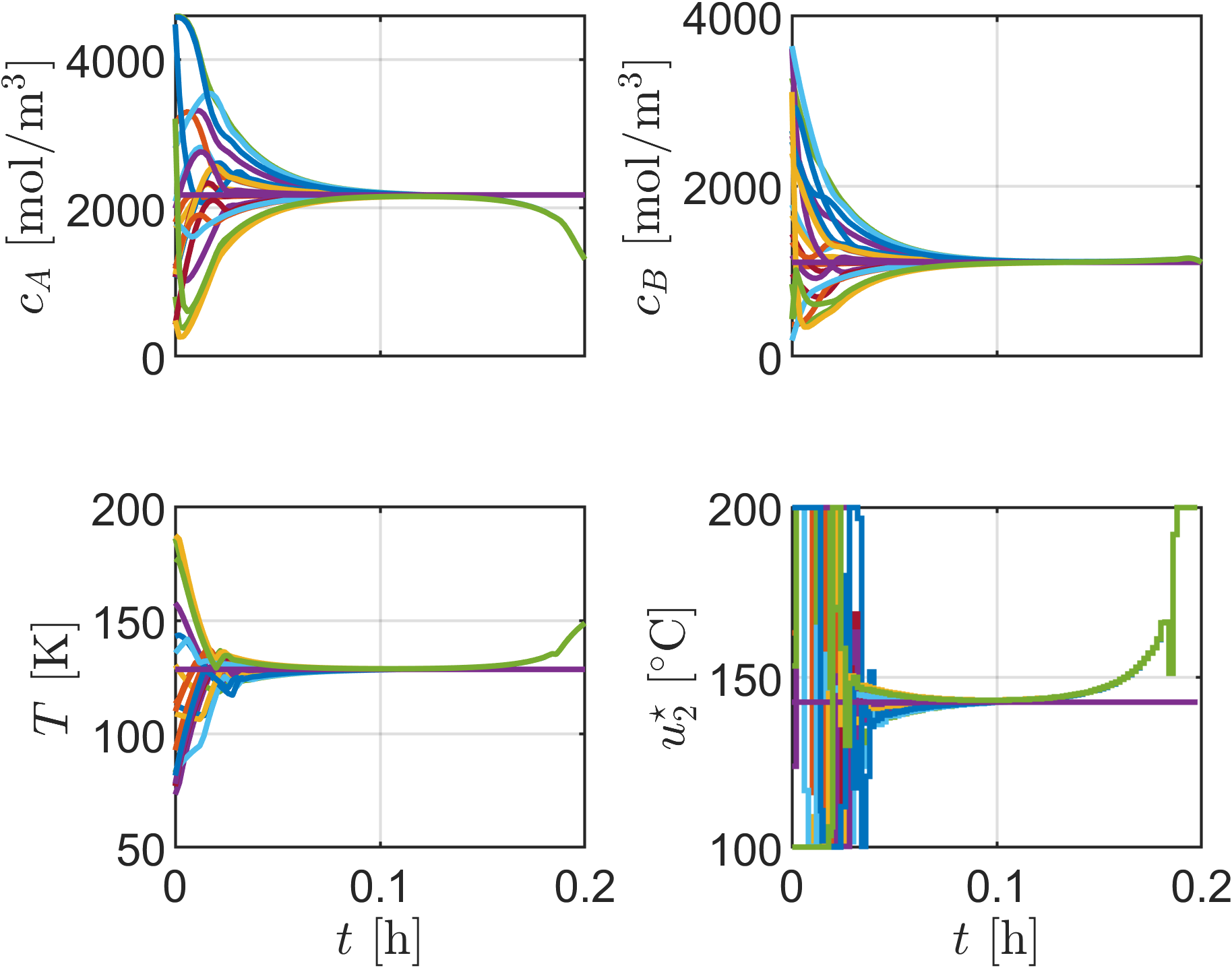}
		\caption{CSTR example: optimal open-loop solutions of the full-order OCP \eqref{eq:OCPk} with different $x_0$. }\label{fig:CSTRturnpike}
	\end{figure}

With $N_{\text{train}}=200$ and $N_{x0}=20$, we summarize the averaged closed-loop performance for different choices of $S(x)$ in Table \ref{tab:PerformanceCSTR}. It shows that one can reduce the dimensionality from $200$ to $5$ with a minuscule performance loss in the range of $10^{-2}\%$ for $S(x) = I$ and $S(x) = \mathbf{u}^\star$, while the performance loss of the corresponding short-horizon NMPC is around $20\%$. Moreover, the closed-loop trajectories depicted in Figure~\ref{fig:CSTRtrajectories} also show a good match among $S(x) = \mathbf{u}^\star$, $S(x) = I$, and the full-order NMPC.  As shown in Table~\ref{tab:CompTimeCSTR}, we achieve reduced memory footprint and similar computation time compared to the non-condensed problem~\eqref{eq:OCPk}.

	\begin{figure}[t]
	\includegraphics[width=0.85\columnwidth]{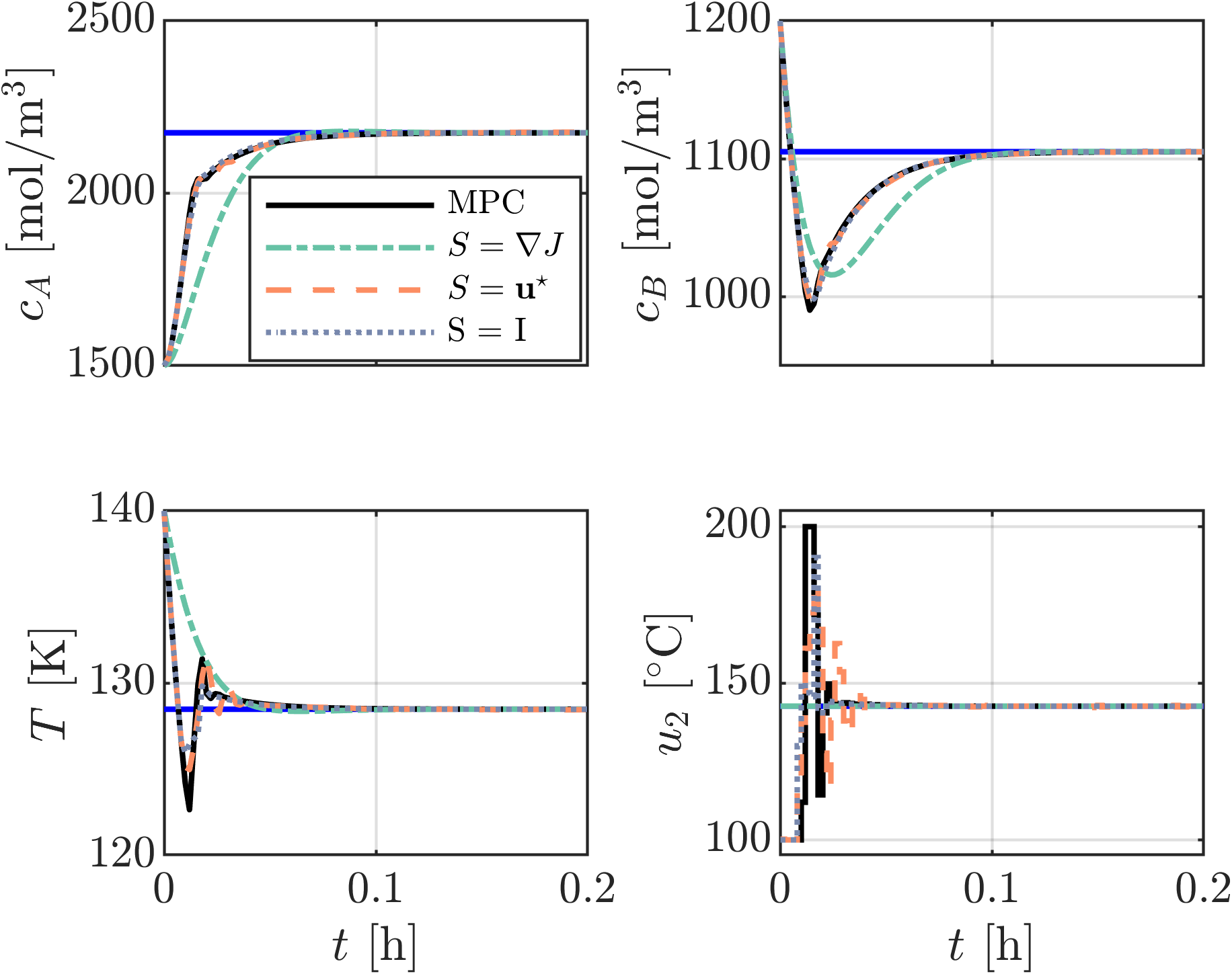}
	\caption{CSTR example: closed-loop trajectories for different choices of $S(x)$ with \# dec. $=5$ and $x_0 = [1500, 1200, 140]^\top $. }\label{fig:CSTRtrajectories}
\end{figure}

\section{Conclusions}\label{sec:con}
This paper has presented a framework to reduce the number of input decision variables in NMPC based on active subspaces. The proposed framework allows the reduction without  sacrificing  closed-loop performance or feasibility. To this end, we have shown that by inheriting the inactive part of a feasible guess, recursive feasibility of the reduced dimensional optimization problem can be enforced. Moreover,  constructing the reduced dimensional active subspace via global sensitivity analysis, linear and nonlinear examples for stabilzing and economic MPC formulations have illustrated significant dimensionality reduction  with only minor loss of closed-loop performance.  In contrast, we observe that nominal NMPC schemes which employ the same number of decision variables (via shorter horizons) as used in the reduced subspaces  either encounter infeasibility (if terminal constraints are present) or they result in significant closed-loop performance loss.
Future work should discuss error and performance bounds and online adaptation of subspaces.

\bibliographystyle{IEEEtran}        
\bibliography{All}           

\end{document}